\newcommand*{\LargerCdot}{\raisebox{-0.25ex}{\scalebox{1.8}{$\cdot$}}}
\newcommand{\abs}[1]{\lvert#1\rvert}
\DeclareMathOperator{\Sym}{Sym}
\DeclareMathOperator{\Sp}{Sp}
\DeclareMathOperator{\Or}{O}
\newtheorem{theorem}{Theorem}
\newtheorem{lemma}[theorem]{Lemma}
\newtheorem{corollary}[theorem]{Corollary}
\newtheorem*{theorem*}{Theorem}
\theoremstyle{remark}
\newtheorem{remark}[theorem]{Remark}
\begin{document}

\title[The non-existence of sharply $2$-transitive sets in $\mathrm{Sp}(2d,2)$]{The non-existence of sharply $2$-transitive sets of permutations in $\mathrm{Sp}(2d,2)$ of degree $2^{2d-1}\pm 2^{d-1}$}

\author{Dominik Barth}

\address{Institute of Mathematics\\ University of Würzburg \\ Emil-Fischer-Straße 30 \\ 97074 Würzburg, Germany}

\email{dominik.barth@mathematik.uni-wuerzburg.de}

\subjclass[2010]{Primary: 20B20, 51A50; Secondary: 51E15}

\keywords{Sharply transitive set, Symplectic and Orthogonal Groups}

\begin{abstract}
We use Müller and Nagy's method of contradicting subsets to give a new proof for the non-existence of sharply $2$-transitive subsets of the symplectic groups $\mathrm{Sp}(2d,2)$ in their doubly-transitive actions of degrees $2^{2d-1}\pm 2^{d-1}$. The original proof by Grundhöfer and Müller was rather complicated and used some results from modular representation theory, whereas our new proof requires only simple counting arguments.
\end{abstract}

\maketitle

\section{Sharply \texorpdfstring{$2$}{2}-transitive sets in finite permutation groups}

Let $\Omega$ be a finite set. A set $S \subseteq \Sym \Omega$ of permutations is called \emph{sharply transitive} if for each $\alpha, \beta \in \Omega$ there is a unique $g\in S$ with $\alpha^g=\beta$. We call $S$ \emph{sharply $2$-transitive} on $\Omega$ if it is sharply transitive on the set of pairs $(\omega_1,\omega_2)\in \Omega^2$ with $\omega_1 \neq \omega_2$.

Sharply transitive and sharply $2$-transitive sets of permutations correspond to Latin squares and projective planes, respectively, and are therefore relevant to the fields of combinatorics and geometry.

In the 1970s, Lorimer started the classification of all finite permutation groups containing a sharply $2$-transitive subset, see \cite{Lorimer1974} for a summary of his work.
In 1984, O'Nan continued Lorimer's research and mentions in \cite{ONan1984} that ``work on the groups $\Sp(2d,2)$ of degrees $2^{2d-1}\pm 2^{d-1}$ is in progress'' --- these are the permutation groups our paper will mainly focus on. 
Later in 2009, Grundhöfer and Müller \cite{Grundhoefer2009} extended O'Nan's character-theoretic methods and finally showed --- among many non-existence results for other almost simple finite permutation groups --- that there are no sharply $2$-transitive sets in $\Sp(2d,2)$ of degree $2^{2d-1}\pm 2^{d-1}$. Their complicated proof was based on results by Sastry and Sin \cite{Sastry2002} regarding characteristic $2$ permutation representations for symplectic groups.

In 2010, Müller and Nagy \cite{Mueller2011} introduced the so-called method of \emph{contradicting subsets} and were --- in addition to proving new results --- also able to give simple combinatorial proofs for most of the previously known results by Lorimer, O'Nan, Grundhöfer and Müller.

However, they didn't reprove the non-existence of sharply $2$-transitive sets in the symplectic groups $\Sp(2d,2)$ of degrees $2^{2d-1} \pm 2^{d-1}$. This is done in the present paper.
More precisely, we construct contradicting subsets for the orthogonal groups $\Or^\pm(2d,2)$, $d \geq 4$, in their natural actions on singular vectors --- which happen to be the point stabilizers of the symplectic groups.
	
The paper concludes with a remark on limitations of the contradicting subset method and we conjecture that contradicting subsets cannot be used to prove the non-existence of sharply $2$-transitive sets in the full symmetric groups $S_n$.

\section{Contradicting subsets}

In this short section we recap the basic facts about the simple yet powerful method of contradicting subsets. We closely follow \cite{Mueller2011}.

Let $G\leq \Sym \Omega$ be a group of permutations on a finite set $\Omega$ with $\abs{\Omega}=n$.
We denote the permutation matrix of $g \in G$ by $P_g \in \lbrace 0,1 \rbrace^{n \times n}$ and the $n \times n$ all-one matrix by $J$.
Using these definitions, sharply transitive subsets $S \subseteq G$ clearly correspond to $0$-$1$-solutions of the following system of linear equations:
  
\begin{equation}\label{eq:J} \tag{J}
\sum_{g \in G} x_g P_g = J.
\end{equation}
The next simple lemma is the main ingredient of Müller and Nagy's contradicting subset method.

\begin{lemma}\label{lem:contr}
Let $S \subseteq \Sym \Omega$ be sharply transitive and let $B$, $C$ be arbitrary subsets of $\Omega$. Then $\sum_{g \in S} \abs{B \cap C^g} = \abs{B}\abs{C}$.
\end{lemma}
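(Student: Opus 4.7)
The plan is to prove this by a direct double-counting argument on a suitable set of triples, which is the cleanest way to exploit the defining property of sharp transitivity.

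Specifically, I would consider the set
\[
T = \{(b,c,g) \in B \times C \times S : c^{g} = b\}
\]
and count $|T|$ in two different ways. First, fixing the pair $(b,c) \in B \times C$: since $S$ is sharply transitive on $\Omega$, for \emph{each} such pair there is a unique $g \in S$ with $c^{g}=b$, so this count yields exactly $|B|\cdot|C|$ triples. Second, fixing $g \in S$: the elements $b \in B$ that arise are precisely those of the form $b = c^{g}$ for some $c \in C$, i.e.\ the elements of $B \cap C^{g}$, and each such $b$ determines $c = b^{g^{-1}}$ uniquely. Hence the number of triples in $T$ with second coordinate $g$ equals $|B \cap C^{g}|$. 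Equating the two counts gives the identity $\sum_{g \in S} |B \cap C^{g}| = |B|\cdot|C|$.

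There is essentially no obstacle here; the only thing to be careful about is bookkeeping in the ``fix $g$'' count, to make sure one does not inadvertently replace $C^{g}$ by $C^{g^{-1}}$. An alternative route would be to take equation \eqref{eq:J}, sandwich it between the characteristic row vector $\chi_B$ and the characteristic column vector $\chi_C^{\top}$, and observe that $\chi_B J \chi_C^{\top} = |B|\cdot|C|$ while $\chi_B P_g \chi_C^{\top}$ equals the number of $\alpha \in B$ with $\alpha^{g} \in C$; summing over a sharply transitive $S$ then yields the same formula (up to replacing $S$ by $S^{-1}$, which is again sharply transitive). I prefer the triple-counting presentation because it makes the role of sharp transitivity completely transparent in a single line.
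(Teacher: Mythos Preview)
Your double-counting argument is correct and is exactly the standard proof of this fact. Note, however, that the paper does not actually supply a proof of this lemma: it is stated without proof, with a reference to M\"uller and Nagy \cite{Mueller2011}, so there is nothing in the paper to compare your argument against. Your proposal would serve perfectly well as the omitted proof.

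One trivial slip: in the ``fix $g$'' step you write ``the number of triples in $T$ with second coordinate $g$,'' but $g$ is the \emph{third} coordinate of the triple $(b,c,g)$. This is purely a typo and does not affect the mathematics.
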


We often use Lemma~\ref{lem:contr} in the following way:

\begin{corollary}
Let $G$ be a permutation group on a finite set $\Omega$. Assume that there are subsets $B,C$ of $\Omega$ and a natural number $k$ such that $k$ divides $\abs{B \cap C^g}$ for all $g \in G$ but does not divide $\abs{B}\abs{C}$. Then $G$ does not contain a sharply transitive subset. In this situation we call $B,C$ \emph{contradicting subsets} for $G$ (modulo $k$).
\end{corollary}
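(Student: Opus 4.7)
The plan is to argue by contraposition: I will assume that $G$ does contain a sharply transitive subset $S$ and then derive a contradiction with the divisibility hypothesis on $\abs{B}\abs{C}$. The crucial observation is that Lemma~\ref{lem:contr} gives us an exact identity for the sum $\sum_{g\in S}\abs{B\cap C^g}$, so any divisibility property of the individual summands transfers immediately to the total.

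More concretely, the first step is to invoke Lemma~\ref{lem:contr} with the given subsets $B$ and $C$ and the assumed sharply transitive $S\subseteq G\subseteq\Sym\Omega$, yielding
\[
\sum_{g\in S}\abs{B\cap C^g}=\abs{B}\abs{C}.
\]
The second step is to restrict the sum from $G$ to $S$: since $S\subseteq G$, the hypothesis ``$k$ divides $\abs{B\cap C^g}$ for all $g\in G$'' in particular applies to every $g\in S$, so $k$ divides every summand on the left. Hence $k$ divides the left-hand side, and therefore also the right-hand side $\abs{B}\abs{C}$, contradicting the assumption that $k\nmid \abs{B}\abs{C}$.

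There is essentially no obstacle here; the corollary is a direct packaging of Lemma~\ref{lem:contr} together with an elementary divisibility argument, and the only thing to verify is that the quantifier ``for all $g\in G$'' in the hypothesis is strong enough to cover the subset $S$, which it clearly is.
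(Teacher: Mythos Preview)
Your argument is correct and is exactly the intended one: the corollary is an immediate consequence of Lemma~\ref{lem:contr}, and the paper in fact states it without proof. Your contrapositive write-up, applying the lemma to a hypothetical sharply transitive $S\subseteq G$ and using that $k\mid\abs{B\cap C^g}$ for each $g\in S$ forces $k\mid\abs{B}\abs{C}$, is precisely how the implication is meant to be read.
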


\begin{remark}\label{rem:modk}
It is easy to see that the existence of contradicting subsets modulo $k$ implies the unsolvability of \eqref{eq:J} in the ring $\mathbb{Z}/k\mathbb{Z}$ and therefore in particular over the integers.
\end{remark}

\section{The actions of \texorpdfstring{$\Sp(2d,2)$}{Sp(2d,2)} of degrees \texorpdfstring{$2^{2d-1} \pm 2^{d-1}$}{2\textasciicircum(2d-1)\textpm2\textasciicircum(d-1)}}

In this section we describe the doubly transitive actions of the symplectic groups $\Sp(2d,2)$ of degrees $2^{2d-1} \pm 2^{d-1}$, sometimes called \emph{Jordan-Steiner} actions.
We roughly follow the standard arguments given in \cite[chapter 2]{Sastry2002}.
For other references regarding this topic, see \cite[chapter 7.7]{Dixon1996} and \cite{Mortimer1980}.

Fix $d \geq 2$ and let $V$ be a $2d$-dimensional vector space over $\mathbb{F}_2$ equipped with a symplectic (i.e.\ alternating, bilinear and non-degenerate) form $\varphi$. Let further $\Omega$ denote the set of quadratic forms $\theta : V \rightarrow \mathbb{F}_2$ which polarize to $\varphi$, that is $\Omega = \lbrace \theta:V \rightarrow \mathbb{F}_2 \mid \theta(u+v)=\theta(u)+\theta(v)+\varphi(u,v)~ \forall u,v \in V \rbrace$. The symplectic group $\Sp(V,\varphi)$ acts on $\Omega$ via $\theta^g(v)=\theta(v^{g^{-1}})$.

Now fix some $\theta \in \Omega$. Then for all $\theta' \in \Omega$ the map $\theta-\theta':V \rightarrow \mathbb{F}_2$ is linear and can thus be written as $\theta-\theta'=\varphi(\LargerCdot,a)$ for some $a \in V$. It follows that $\Omega$ consists of the elements $\theta_a$, $a \in V$, where $\theta_a(v):=\theta(v)+\varphi(v,a)$.

The point stabilizer $\Sp(V,\varphi)_\theta$ of $\theta \in \Omega$ in this action is the orthogonal group $\Or(V,\theta)$. Furthermore, the action of $\Or(V,\theta)$ on $\Omega$ is equivalent to the natural action of $\Or(V,\theta)$ on $V$ if we identify $v \in V$ with $\theta_v \in \Omega$. (The equivalence follows from $(\theta_v)^g(x)=\theta_v(x^{g^{-1}})=\theta(x^{g^{-1}})+\varphi(x^{g^{-1}},v)=\theta(x)+\varphi(x,v^g)=\theta_{v^g}(x)$ for $x,v \in V$ and $g \in \Or(V,\theta)$)

The quadratic forms in $\Omega$ fall into two categories corresponding to their Witt-Index. Let $\Omega_+$ denote the forms of Witt-Index $d$ and $\Omega_-$ those of Witt-Index $d-1$. It is clear that both $\Omega_+$ and $\Omega_-$ are invariant under $\Sp(V,\varphi)$. Choose $\varepsilon \in \lbrace -,+\rbrace$ such that $\theta \in \Omega_\varepsilon$. Then  \cite[equation 5]{Sastry2002} or \cite[Lemma 1]{Dye1979} yield $\Omega_\varepsilon = \lbrace \theta_v \mid \theta(v)=0 \rbrace$, in other words $\theta$ and $\theta_v$ have the same Witt-Index if and only if $\theta(v)=0$.

According to Witt's Lemma, the orthogonal group $\Or(V,\theta)$ has three orbits on $V$, namely $\lbrace 0 \rbrace$, $\lbrace v \in V \mid \theta(v)=0, v \neq 0 \rbrace$ and $\lbrace v \in V \mid \theta(v)=1 \rbrace$. Via $V \ni v \mapsto \theta_v \in \Omega$, we conclude that $\Or(V, \theta)$ has the following three orbits on $\Omega$: $\lbrace \theta \rbrace$, $\Omega_\varepsilon \setminus \lbrace \theta \rbrace$ and $\Omega_{-\varepsilon}$. Thus the symplectic group $\Sp(V,\varphi)$ is doubly transitive on both $\Omega_+$ and $\Omega_-$. Furthermore, it is well-known that $\abs{\Omega_\pm} = 2^{2d-1} \pm 2^{d-1}$.

In the next section we prove the non-existence of sharply $2$-transitive sets in $\Sp(2d,2)$ by showing the non-existence of sharply transitive sets in the point stabilizers of $\Sp(2d,2)$ on the remaining points. The foregoing discussion shows that these are the orthogonal groups $\Or^\pm(2d,2)$ in their natural actions on the singular vectors.

\section{Contradicting subsets for orthogonal groups}\label{sec:contr_orth}

In order to prove that the orthogonal group $\Or^\pm(2d,2)$, $d \geq 4$, has no sharply transitive subset on the set of singular vectors, we construct a pair $B,C$ of contradicting subsets.

Fix $d \geq 4$ and let $V$ be a $2d$-dimensional vector space equipped with an quadratic form $\theta$ that polarizes to a symplectic form $\varphi$.
For a subset $X \subseteq V$ we define $X^0 := \lbrace v \in X \mid \theta(v)=0, v \neq 0 \rbrace$ and $X^1 := \lbrace v \in X \mid \theta(v)=1 \rbrace$.
We want to construct subsets $B, C \subseteq V^0$ such that $2^{d-1}$ divides $\abs{B \cap C^g}$ for all $g \in \Or(V,\theta)$ but $2^{d-1}$ does not divide $\abs{B}\abs{C}$.

To do this, we decompose $V = U \bot W$ into non-degenerate subspaces $U$, $W$ of even dimensions $\dim U, \dim W \geq 4$. This is always possible, as we can choose $U$ as an orthogonal sum of two or more hyperbolic planes and set $W := U^\bot$.

Now we are able to define our contradicting subsets for $\Or^\pm(2d,2)$ and state the main theorem.

\begin{theorem}\label{th:main}
Let $B:= U^1 + W^1 = \lbrace u+w \mid u\in U, w\in W, \theta(u)=\theta(w)=1 \rbrace$ and $C:= V^0 \cap c^\bot = \lbrace v \in V \mid \theta(v)=0, \varphi(v,c)=0, v \neq 0 \rbrace$ for some fixed $c \in V^1$.
Then the following holds.
\begin{enumerate}[(i)]
\item $2^{d-1}$ divides $\abs{B \cap C^g}$ for all $g \in \Or(V,\theta)$ but
\item $2^{d-1}$ is no divisor of $\abs{B}\abs{C}$.
\end{enumerate}
Thus $B,C$ are contradicting subsets for the action of $\Or(V,\theta)$ on the singular vectors $V^0$.
Therefore, the orthogonal groups $\Or^\pm(2d,2)$, $d \geq 4$, contain no sets that are sharply transitive on the singular vectors.
\end{theorem}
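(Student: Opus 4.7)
The plan is to handle (ii) by direct arithmetic and then reduce (i) to a careful case analysis along $V = U \perp W$. Throughout, set $\dim U = 2a$ and $\dim W = 2b$, so $a+b = d$ and $a, b \geq 2$. Note that $B \subseteq V^0$: for $u \in U^1$, $w \in W^1$ one has $\theta(u+w) = 1 + 1 + \varphi(u,w) = 0$. For (ii), I would use the standard count $|X^1| = 2^{\dim X - 1} - \varepsilon_X \cdot 2^{\dim X / 2 - 1}$ on each non-degenerate even-dimensional quadratic space $X$ (with Witt-index indicator $\varepsilon_X \in \{\pm 1\}$) to obtain $|B| = |U^1| \cdot |W^1| = 2^{d-2}(2^a - \varepsilon_U)(2^b - \varepsilon_W)$. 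For $|C|$, translation by $c$ partitions $c^\perp$ into pairs $\{v, v+c\}$ on which $\theta$ takes the values $\{0, 1\}$ (since $\theta(c) = 1$ and $\varphi(v,c) = 0$), so $|\{v \in c^\perp : \theta(v) = 0\}| = 2^{2d-2}$ and $|C| = 2^{2d-2} - 1$ is odd. Hence $v_2(|B| \cdot |C|) = d - 2 < d - 1$.

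For (i), since $C^g = V^0 \cap (c^g)^\perp$ and $V^0$ is $\Or(V, \theta)$-invariant, it suffices to show $2^{d-1}$ divides $|B \cap (c')^\perp|$ for every $c' \in V^1$. Writing $c' = c_1 + c_2$ with $c_1 \in U$ and $c_2 \in W$, and using $U \perp W$, we get $\theta(c') = \theta(c_1) + \theta(c_2)$, so the constraint $c' \in V^1$ forces exactly one of $\theta(c_1), \theta(c_2)$ to equal $1$. Setting $f(x, X) := |\{v \in X^1 : \varphi(v, x) = 0\}|$, the perpendicularity $\varphi(u + w, c') = 0$ splits as $\varphi(u, c_1) = \varphi(w, c_2)$ and yields
\[
|B \cap (c')^\perp| = f(c_1, U) \, f(c_2, W) + (|U^1| - f(c_1, U)) \, (|W^1| - f(c_2, W)).
\]

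The heart of the proof is then to compute $f(x, U)$ in three cases: $x = 0$ gives $f = |U^1| = 2^{a-1}(2^a - \varepsilon_U)$; for $x \in U^1$ the same coset-pairing trick as for $|C|$ gives $f = 2^{2a-2}$; and for $x \in U^0$ one passes to the non-degenerate quotient $x^\perp / \langle x \rangle$, obtaining $f = 2^{a-1}(2^{a-1} - \varepsilon')$ for some $\varepsilon' \in \{\pm 1\}$. In every case both $f(x, U)$ and $|U^1| - f(x, U)$ are divisible by $2^{a-1}$; writing them as $2^{a-1}\alpha$ and $2^{a-1}\alpha'$, the three explicit formulas together with $a \geq 2$ show that $\alpha$ is odd iff $\theta(x) = 0$, while $\alpha'$ has the opposite parity. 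The analogous statement holds on the $W$-side with exponent $b-1$ and numbers $\beta, \beta'$. Pulling $2^{a+b-2} = 2^{d-2}$ out of the display reduces (i) to $\alpha \beta + \alpha' \beta' \equiv 0 \pmod 2$; since $\theta(c_1) \neq \theta(c_2)$, the odd factor on the $U$-side is multiplied by the even factor on the $W$-side in both products, so the sum is even. The main obstacle is keeping the case analysis for $f$ transparent; the hypothesis $d \geq 4$ (equivalently $a, b \geq 2$) is genuinely needed, since it is exactly what makes $\alpha \equiv 0 \pmod 2$ in the $x \in U^1$ case.
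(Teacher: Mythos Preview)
Your proposal is correct and follows essentially the same route as the paper: both reduce (i) to showing $2^{d-1}\mid |B\cap z^\perp|$ for every $z\in V^1$, decompose $z$ along $V=U\perp W$, and express $|B\cap z^\perp|$ as $|U^1\cap c_1^\perp|\cdot|W^1\cap c_2^\perp|+|U^1\setminus c_1^\perp|\cdot|W^1\setminus c_2^\perp|$. The only cosmetic differences are that the paper computes $|U^1\cap x^\perp|$ for singular $x$ via the bijection $v\mapsto v+x$ rather than your quotient $x^\perp/\langle x\rangle$, and then verifies the divisibility by direct arithmetic in each case rather than by your (slightly slicker) extraction of $2^{d-2}$ followed by a parity check on $\alpha\beta+\alpha'\beta'$.
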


As a stabilizer $\Sp(2d,2)_\theta$ is an orthogonal group we immediately obtain.

\begin{corollary}
The symplectic group $\Sp(2d,2)$, $d \geq 4$, in one of its two doubly transitive actions of degree $2^{2d-1}\pm 2^{d-1}$, has no sharply 2-transitive subset.
\end{corollary}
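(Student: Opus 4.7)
The plan is to reduce the non-existence of a sharply 2-transitive subset of $\Sp(2d,2)$ to the non-existence of a sharply transitive subset of a point stabilizer, and then invoke Theorem~\ref{th:main}. Concretely, I would first record the following general fact, which is immediate from the definitions: if $G \leq \Sym \Omega$ contains a sharply 2-transitive subset $S$, then for any fixed $\alpha \in \Omega$ the subset $S_\alpha := \{g \in S : \alpha^g = \alpha\}$ is sharply transitive on $\Omega \setminus \{\alpha\}$. Indeed, given $\beta,\gamma \in \Omega \setminus \{\alpha\}$, sharp 2-transitivity applied to the pairs $(\alpha,\beta)$ and $(\alpha,\gamma)$ produces a unique $g \in S$ with $\alpha^g = \alpha$ and $\beta^g = \gamma$, which is precisely a unique element of $S_\alpha$ sending $\beta$ to $\gamma$.

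Next I would specialise this to $G = \Sp(V,\varphi)$ acting on $\Omega_\varepsilon$, $\varepsilon \in \{+,-\}$, with $\abs{\Omega_\varepsilon} = 2^{2d-1}+\varepsilon 2^{d-1}$. Fix $\theta \in \Omega_\varepsilon$. The discussion in Section~3 identifies the point stabilizer $\Sp(V,\varphi)_\theta$ with the orthogonal group $\Or(V,\theta) = \Or^\varepsilon(2d,2)$, and moreover identifies the induced action of $\Or(V,\theta)$ on $\Omega_\varepsilon \setminus \{\theta\}$ with its natural action on the set $V^0$ of non-zero singular vectors, via the bijection $v \mapsto \theta_v$. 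Hence the existence of a sharply 2-transitive subset of $\Sp(2d,2)$ on $\Omega_\varepsilon$ would force the existence of a sharply transitive subset of $\Or^\varepsilon(2d,2)$ on $V^0$.

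But Theorem~\ref{th:main}, combined with the corollary of Lemma~\ref{lem:contr} on contradicting subsets, rules out any such sharply transitive subset for all $d \geq 4$. This contradiction establishes the corollary. There is no real obstacle here: the only non-trivial ingredient is Theorem~\ref{th:main}, and the passage from sharp 2-transitivity of $\Sp(2d,2)$ to sharp transitivity of the point stabilizer is a standard observation that takes only a few lines to verify.
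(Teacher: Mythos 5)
Your proposal is correct and follows exactly the route the paper takes: the paper's (very terse) proof of the corollary rests on the same standard reduction — the stabilizer of a point in a sharply $2$-transitive set is sharply transitive on the remaining points, the point stabilizer $\Sp(V,\varphi)_\theta$ is $\Or(V,\theta)$ acting on $\Omega_\varepsilon\setminus\{\theta\}\cong V^0$, and Theorem~\ref{th:main} rules this out. You have merely written out the details that the paper leaves implicit.
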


\begin{remark}
The first contradicting subsets for $\Sp(8,2)$ have been found with a Magma \cite{Bosma1997} program that was searching the orbits of maximal subgroups of $\Or^-(8,2)$ for contradicting subsets. The construction given above is a generalization of those contradicting subsets to $\Or^\pm(2d,2)$ for all $d \geq 4$.
\end{remark}

\section{Proof of the main theorem}

First we introduce a notation: If $(V,\theta)$ is a $2d$-dimensional non-degenerate orthogonal space over $\mathbb{F}_2$, we set $\pm_V := +$ if $\theta$ has Witt-Index $d$ and $\pm_V:= -$ in case of Witt-Index $d-1$. Let further $\mp_V$ denote the opposing sign $-\pm_V$.

In order to prove Theorem~\ref{th:main}, we need to do some simple combinatorics in orthogonal spaces. First, we calculate the number of nonsingular vectors perpendicular to a given vector.

\begin{lemma}\label{lem:comb}
Let $V$ be a $2d$-dimensional non-degenerate orthogonal space over $\mathbb{F}_2$.
Then the following holds.
\begin{enumerate}[(i)]
\item $\abs{V^0}=2^{2d-1}\pm_V 2^{d-1}-1$ and $\abs{V^1}=2^{2d-1}\mp_V 2^{d-1}$.
\item For $v \in V^1$ we have $\abs{V^1 \cap v^\bot} = \abs{(V^0 \cap v^\bot) \dot{\cup} \lbrace 0 \rbrace}= 2^{2d-2}$.
\item For $v \in V^1$ we have $\abs{V^1\setminus v^\bot}=2^{2d-2} \mp_V 2^{d-1}$.
\item For $v \in V^0$ we have $\abs{V^1 \setminus v^\bot}=2^{2d -2}$.
\item For $v \in V^0$ we have $\abs{V^1 \cap v^\bot}= 2^{2d-2} \mp_V 2^{d-1}$.
\end{enumerate}
\end{lemma}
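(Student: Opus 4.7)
Part~(i) is a standard consequence of Witt's decomposition theorem: $V$ splits as an orthogonal sum of hyperbolic planes plus (in the type~$-$ case) a single anisotropic plane, which together with a trivial induction on $d$ from the base cases $d=1$ yields the two cardinality formulas. I would either just cite this or prove it quickly by induction, noting that if $V = H \bot V'$ with $H$ a hyperbolic plane, then $\abs{V^0}$ and $\abs{V^1}$ follow from the multiplicative identity $\theta(h+v')=\theta(h)+\theta(v')$ and the known counts for $H$.

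The heart of the lemma is the interaction between the subspace $v^\perp$ and the quadratic form $\theta$, and here the proof splits depending on whether $\theta(v)=1$ or $\theta(v)=0$. In both cases the key is that $\varphi$ is alternating, so $v\in v^\perp$ in characteristic $2$. For~(ii) with $v\in V^1$, translation by $v$ is a bijection of $v^\perp$ to itself, and for any $u\in v^\perp$ one has $\theta(u+v)=\theta(u)+\theta(v)+\varphi(u,v)=\theta(u)+1$. So translation by $v$ swaps the sets $\{u\in v^\perp:\theta(u)=0\}$ and $V^1\cap v^\perp$; since $\abs{v^\perp}=2^{2d-1}$ each has size $2^{2d-2}$. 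Part~(iii) is then immediate from (i) minus (ii).

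For~(iv) and~(v) with $v\in V^0\setminus\{0\}$, the same identity now gives $\theta(u+v)=\theta(u)$ for $u\in v^\perp$, so $\theta$ is constant on the cosets of $\langle v\rangle$ in $v^\perp$ and descends to a quadratic form $\bar\theta$ on $\bar V:=v^\perp/\langle v\rangle$. Because the radical of the restricted bilinear form on $v^\perp$ is exactly $\langle v\rangle$, the induced form on $\bar V$ is non-degenerate of dimension $2d-2$. Applying~(i) to $\bar V$ and lifting by the factor $2=\abs{\langle v\rangle}$ gives $\abs{V^1\cap v^\perp}=2\cdot\abs{\bar V^1}=2^{2d-2}\mp_{\bar V} 2^{d-1}$, and then (iv) follows by subtraction from $\abs{V^1}$.

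The single non-routine step is verifying that $\bar V$ has the same type as $V$, i.e.\ $\pm_{\bar V}=\pm_V$. I would argue this by extending $v$ to a hyperbolic pair $(v,v')$ in $V$ and writing $V=\langle v,v'\rangle\bot W$; then $v^\perp=\langle v\rangle \bot W$ and hence $\bar V\cong W$ as orthogonal spaces, so $\bar V$ is obtained from $V$ by stripping off one hyperbolic plane. Since adding a hyperbolic plane raises the dimension by $2$ and the Witt index by $1$, it preserves the type, and consequently so does the inverse operation. This is the main—and essentially only—geometric input; once it is in hand the rest is bookkeeping with (i).
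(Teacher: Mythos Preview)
Your argument is correct. Parts (i)--(iii) match the paper's proof essentially verbatim: (i) is cited, (ii) uses the translation $u\mapsto u+v$ on $v^\bot$ to swap the singular and nonsingular halves, and (iii) is subtraction.

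For (iv)/(v) you take a genuinely different route. The paper proves (iv) first, by the same translation trick applied to the \emph{complement} $V\setminus v^\bot$: when $v\in V^0$ and $x\notin v^\bot$ one has $\theta(x+v)=\theta(x)+\varphi(x,v)=\theta(x)+1$, so $x\mapsto x+v$ is a bijection $V^1\setminus v^\bot\to V^0\setminus v^\bot$, whence $\abs{V^1\setminus v^\bot}=\tfrac12\abs{V\setminus v^\bot}=2^{2d-2}$. Part (v) then drops out by subtraction. Your approach instead proves (v) first by passing to the quotient $\bar V=v^\bot/\langle v\rangle$, checking that it is non-degenerate of the same type (via a hyperbolic splitting $V=\langle v,v'\rangle\bot W$), and invoking (i) in dimension $2d-2$. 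This is perfectly valid and arguably more structural, but it costs you the extra work of verifying type preservation, whereas the paper's bijection on $V\setminus v^\bot$ is a one-line observation that avoids any quotient or Witt-index bookkeeping.
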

\begin{proof} ~
\begin{enumerate}[(i)]
\item This result is well-known, see for example \cite[Proposition 14.47]{Grove2001} or \cite[Theorem 11.5]{Taylor1992}.

\item The mapping
$x \mapsto x + v$
is a bijection between $V^1 \cap v^\bot$ and $(V^0 \cap v^\bot) \cup \lbrace 0 \rbrace$.
Thus we obtain $\abs{(V^0 \cap v^\bot) \cup \lbrace 0 \rbrace} =\abs{V^1 \cap v^\bot} = \frac{1}{2}\abs{v^\bot} = 2^{2d -2}$.

\item
We use (i) and (ii):
\begin{align*}
\abs{V^1 \setminus v^\bot} = \abs{V^1} - \abs{V^1 \cap v^\bot} = 2^{2d-1}\mp_V 2^{d-1} - 2^{2d -2} = 2^{2d-2} \mp_V 2^{d-1}.
\end{align*}

\item
In this case the mapping
$x \mapsto x + v$
is a bijection between $V^1 \setminus v^\bot$ and $V^0 \setminus v^\bot$.
Thus $\abs{V^1 \setminus v^\bot} = \frac{1}{2} \abs{V \setminus v^\bot} = \frac{1}{2} (\abs{V}-\abs{v^\bot}) = 2^{2d -2}$.

\item
follows from (i) and (iv):
\begin{align*}
\abs{V^1 \cap v^\bot} = \abs{V^1} - \abs{V^1 \setminus v^\bot} = 2^{2d-1}\mp_V 2^{d-1} - 2^{2d -2}=2^{2d-2} \mp_V 2^{d-1}.
\end{align*}
\end{enumerate}
\end{proof}

Using Lemma~\ref{lem:comb} we conclude the following result which is important for the proof of the main theorem.

\begin{lemma}\label{lem:divides}
Let $(U,\theta)$ and $(W,\theta')$ be non-degenerate orthogonal spaces over $\mathbb{F}_2$ with dimensions $\dim U=2a$ and $\dim W=2b$. Further choose $u \in U$ and $w \in W$ with $\theta(u)+\theta'(w)=1$. If $a,b \geq 2$ then $2^{a+b-1}$ divides 
\begin{align*}
\abs{U^1 \cap u^\bot}\cdot \abs{W^1 \cap w^\bot}  + \abs{U^1 \setminus u^\bot} \cdot \abs{W^1 \setminus w^\bot}.
\end{align*}
\end{lemma}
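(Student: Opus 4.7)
My approach is a direct case computation. The condition $\theta(u)+\theta'(w)=1$ in $\mathbb{F}_2$ forces exactly one of $\theta(u),\theta'(w)$ to equal $1$, and since the expression whose divisibility is to be shown is symmetric under the swap $(U,u,a)\leftrightarrow (W,w,b)$, I would without loss of generality assume $\theta(u)=1$ and $\theta'(w)=0$. Thus $u\in U^1$, while $w$ is either $0$ or lies in $W^0$.

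In the main case $w\in W^0$, Lemma~\ref{lem:comb} immediately supplies all four cardinalities: parts (ii),(iii) give $\abs{U^1\cap u^\bot}=2^{2a-2}$ and $\abs{U^1\setminus u^\bot}=2^{2a-2}\mp_U 2^{a-1}$, while parts (iv),(v) give $\abs{W^1\setminus w^\bot}=2^{2b-2}$ and $\abs{W^1\cap w^\bot}=2^{2b-2}\mp_W 2^{b-1}$. Expanding the product sum produces
\begin{equation*}
2^{2a-2}\bigl(2^{2b-2}\mp_W 2^{b-1}\bigr)+\bigl(2^{2a-2}\mp_U 2^{a-1}\bigr)\cdot 2^{2b-2}=2^{2a+2b-3}\mp_W 2^{2a+b-3}\mp_U 2^{a+2b-3}.
\end{equation*}
The three exponents on the right exceed $a+b-1$ by $a+b-2$, $a-2$, and $b-2$ respectively — all non-negative when $a,b\geq 2$ — so $2^{a+b-1}$ divides every summand and hence the total.

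The edge case $w=0$ falls slightly outside the statement of Lemma~\ref{lem:comb}, because $V^0$ excludes the zero vector by convention; but it is easier rather than harder. One simply uses $\abs{W^1\cap w^\bot}=\abs{W^1}=2^{2b-1}\mp_W 2^{b-1}$ and $\abs{W^1\setminus w^\bot}=0$, which collapses the sum to $2^{2a-2}\bigl(2^{2b-1}\mp_W 2^{b-1}\bigr)$; both of its terms again have exponents at least $a+b-1$.

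I do not foresee a genuine obstacle — the whole argument is bookkeeping with powers of two. The conceptual point worth flagging is that the hypothesis $\theta(u)+\theta'(w)=1$ is precisely what kills a potential cross term of size $\pm 2^{a+b-2}$ in the expansion; without it that term survives and divisibility by $2^{a+b-1}$ fails. Verifying this cancellation is therefore the one step where the hypothesis actually enters the calculation.
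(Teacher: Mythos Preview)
Your proof is correct and follows essentially the same route as the paper's: the same symmetry reduction to $u\in U^1$, the same case split $w\in W^0$ versus $w=0$, and the same appeal to Lemma~\ref{lem:comb} for the individual cardinalities. The only cosmetic difference is that the paper verifies divisibility of each of the two products $\abs{U^1\cap u^\bot}\cdot\abs{W^1\cap w^\bot}$ and $\abs{U^1\setminus u^\bot}\cdot\abs{W^1\setminus w^\bot}$ separately rather than summing them first, and it does not include your closing remark about the suppressed $2^{a+b-2}$ cross term.
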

\begin{proof}
The symmetry of $U$ and $W$ allows us to choose $u \in U^1$ without loss of generality. Now two cases remain for $w$: $w\in W^0$ or $w=0 \in W$:

If $w\in W^0$, then we use Lemma~\ref{lem:comb} to calculate
\begin{align*}
\abs{U^1 \cap u^\bot} \cdot \abs{W^1 \cap w^\bot}
= 2^{2a-2} \cdot (2^{2b-2} \mp_W 2^{b-1})
= 2^{2a+2b -4} \mp_W 2^{2a+b-3}
\end{align*}
and
\begin{align*}
\abs{U^1 \setminus u^\bot} \cdot \abs{W^1 \setminus w^\bot}
= (2^{2a-2} \mp_U 2^{a-1})\cdot 2^{2b-2}
= 2^{2a+2b-4} \mp_U 2^{2b+ a-3}.
\end{align*}
Both terms are divisible by $2^{a+b-1}$. The claim follows.

In the case of $w=0 \in W$, we have $w^\bot = W$ and obtain from Lemma~\ref{lem:comb}
\begin{align*}
\abs{U^1 \cap u^\bot} \cdot \abs{W^1 \cap w^\bot} &+ \abs{U^1 \setminus u^\bot} \cdot \abs{W^1 \setminus w^\bot} = \abs{U^1 \cap u^\bot} \cdot \abs{W^1} \\
&= 2^{2a-2} \cdot (2^{2b-1} \mp_W 2^{b-1})
= 2^{2a+2b-3} \mp_W 2^{2a+b-3},
\end{align*}
from which the divisibility follows immediately.
\end{proof}

Now recall the situation in section~\ref{sec:contr_orth}: The $2d$-dimensional orthogonal space $V$ is decomposed into $V=U \bot W$ and we defined $B = U^1+W^1$, $C=V^0 \cap c^\bot$ for $c \in V^1$. Writing $\dim U=2a$ and $\dim W =2b$ we have $a,b \geq 2$ and $a+b=d$.

\begin{lemma}\label{lem:B}
$B$ contains only singular vectors, that is $B \subseteq V^0$, and $2^{d-1}$ does not divide $\abs{B}$.
\end{lemma}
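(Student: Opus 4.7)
The plan is straightforward direct computation in two parts, leveraging the orthogonal decomposition $V = U \bot W$ and the cardinality formulas from Lemma~\ref{lem:comb}.

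For the containment $B \subseteq V^0$: Since $V = U \bot W$, any $u \in U$ and $w \in W$ are orthogonal, i.e. $\varphi(u,w) = 0$. Then for any $u \in U^1$, $w \in W^1$, the polarization identity gives
\[
\theta(u+w) = \theta(u) + \theta(w) + \varphi(u,w) = 1 + 1 + 0 = 0 \in \mathbb{F}_2.
\]
I also need $u+w \neq 0$, which holds because neither $u$ nor $w$ is zero (both satisfy $\theta = 1$) while $U \cap W = \lbrace 0 \rbrace$, so $u + w = 0$ would force $u = w = 0$.

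For the $2$-adic claim: the direct sum decomposition $V = U \oplus W$ ensures the addition map $U^1 \times W^1 \to B$, $(u,w) \mapsto u+w$, is a bijection, so $\abs{B} = \abs{U^1} \cdot \abs{W^1}$. Writing $\dim U = 2a$ and $\dim W = 2b$ with $a+b=d$, Lemma~\ref{lem:comb}(i) applied to each summand yields
\[
\abs{B} = (2^{2a-1} \mp_U 2^{a-1})(2^{2b-1} \mp_W 2^{b-1}) = 2^{d-2}\,(2^a \mp_U 1)(2^b \mp_W 1).
\]
Since $a, b \geq 2$, both factors $2^a \mp_U 1$ and $2^b \mp_W 1$ are odd integers, so the $2$-adic valuation of $\abs{B}$ is exactly $d-2$. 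Hence $2^{d-1}$ does not divide $\abs{B}$.

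There is no genuine obstacle here; the only thing to be slightly careful about is verifying $u+w\neq 0$ so that $B$ really avoids the zero vector, and then tracking the factor of $2$ cleanly through the product $\abs{U^1}\cdot\abs{W^1}$.
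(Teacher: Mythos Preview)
Your proof is correct and follows essentially the same approach as the paper: both use $V=U\oplus W$ to get $\abs{B}=\abs{U^1}\cdot\abs{W^1}$ and then invoke Lemma~\ref{lem:comb}(i). The only cosmetic difference is that the paper expands the product into four powers of $2$ and checks divisibility term by term, whereas you factor out $2^{d-2}$ and note the residual factor $(2^a\mp_U 1)(2^b\mp_W 1)$ is odd---arguably the cleaner presentation.
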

\begin{proof}
$B \subseteq V^0$ is obvious.
From $V = U \oplus W$ and Lemma~\ref{lem:comb} we conclude
\begin{align*}
\abs{B} & = \abs{U^1} \cdot \abs{W^1} = (2^{2a-1} \mp_U  2^{a-1}) \cdot (2^{2b-1} \mp_W  2^{b-1}) \\
&= 2^{2a+2b -2} \mp_W 2^{2a+b-2}\mp_U 2^{a+2b-2} \mp_U \mp_W 2^{a+b-2} \\
&= 2^{2d -2} \mp _W 2^{d+a-2} \mp_U 2^{d+b-2} \mp_U \mp_W 2^{d-2}.
\end{align*}

Now the claim follows because $2^{d-1}$ divides $2^{2d -2}$, $2^{d+a-2}$ and $2^{d+b-2}$, but does certainly not divide $2^{d-2}$.
\end{proof}

We need the following lemma in order to prove the divisibility of $\abs{B \cap C^g}$ by $2^{d-1}$ for all $g \in \Or(V, \theta)$.
\begin{lemma}\label{lem:dividesBcapC}
$2^{d-1}$ divides $\abs{B\cap z^\bot}$ for all $z\in V^1$.
\end{lemma}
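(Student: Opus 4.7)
The plan is to exploit the orthogonal decomposition $V = U \perp W$ to write $z = z_U + z_W$ uniquely with $z_U \in U$, $z_W \in W$, and reduce the count of $\abs{B \cap z^\bot}$ to the expression appearing in Lemma~\ref{lem:divides}. From $\theta(z) = \theta(z_U) + \theta(z_W) = 1$ we get exactly the hypothesis of that lemma.

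First I would observe that since the sum $U \oplus W$ is direct, the natural map $U^1 \times W^1 \to B$, $(u,w) \mapsto u+w$, is a bijection, so $\abs{B \cap z^\bot}$ equals the number of pairs $(u,w) \in U^1 \times W^1$ with $u+w \in z^\bot$. Next I would expand $\varphi(u+w, z_U+z_W)$: the cross terms $\varphi(u, z_W)$ and $\varphi(w, z_U)$ vanish because $U \perp W$, so the condition $u+w \in z^\bot$ becomes simply $\varphi(u, z_U) = \varphi(w, z_W)$ in $\mathbb{F}_2$.

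Splitting according to the common value of $\varphi(u, z_U) = \varphi(w, z_W) \in \{0,1\}$ gives
\begin{align*}
\abs{B \cap z^\bot} = \abs{U^1 \cap z_U^\bot}\cdot\abs{W^1 \cap z_W^\bot} + \abs{U^1 \setminus z_U^\bot}\cdot\abs{W^1 \setminus z_W^\bot}.
\end{align*}
Since $\theta(z_U) + \theta(z_W) = 1$ and $a, b \geq 2$, Lemma~\ref{lem:divides} applies with the choices $u := z_U$, $w := z_W$, yielding that $2^{a+b-1} = 2^{d-1}$ divides the right hand side.

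There is no real obstacle here — the only thing to be careful about is the bookkeeping that the cross terms in $\varphi(u+w, z_U+z_W)$ vanish and that the decomposition $z = z_U + z_W$ produces exactly the configuration required by Lemma~\ref{lem:divides} (in particular, the case $z_U = 0$ or $z_W = 0$ is covered, corresponding to the second case of that lemma's proof).
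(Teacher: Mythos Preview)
Your proof is correct and follows essentially the same approach as the paper: decompose $z$ along $V = U \perp W$, reduce $\abs{B \cap z^\bot}$ to the sum $\abs{U^1 \cap z_U^\bot}\cdot\abs{W^1 \cap z_W^\bot} + \abs{U^1 \setminus z_U^\bot}\cdot\abs{W^1 \setminus z_W^\bot}$, and invoke Lemma~\ref{lem:divides}. The only cosmetic difference is that the paper writes out the vanishing of the cross term $\varphi(z_U,z_W)$ explicitly when deducing $\theta(z_U)+\theta(z_W)=1$, which you might also spell out for completeness.
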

\begin{proof}
We write $z$ as $z=x+y$ with $x \in U$ and $y\in W$ and calculate $\abs{B \cap z^\bot}$ using the decomposition $V=U \bot W$:
\begin{align*}
\abs{B\cap z^\bot} &= \abs{\lbrace u+w \mid u \in U^1,w\in  W^1 , \varphi(u+w,z)=0 \rbrace} \\
&=\abs{\lbrace (u,w) \in U^1 \times W^1 \mid \varphi(u,z)+\varphi(w,z)=0 \rbrace}\\
&=\abs{\lbrace (u,w) \in U^1 \times W^1 \mid \varphi(u,x+y)+\varphi(w,x+y)=0 \rbrace}\\
&=\abs{\lbrace (u,w) \in U^1 \times W^1 \mid \varphi(u,x)+\varphi(w,y)=0 \rbrace} \\
&= \abs{U^1 \cap x^\bot} \cdot \abs{W^1 \cap y^\bot} + \abs{U^1 \setminus x^\bot} \cdot \abs{W^1 \setminus y^\bot}.
\end{align*}
Further holds
\begin{align*}
1=\theta(z)=\theta(x+y)=\theta(x)+\theta(y)+\varphi(x,y)=\theta(x)+\theta(y)
\end{align*}
and using Lemma~\ref{lem:divides} we obtain the desired
\begin{align*}
2^{d-1} = 2^{a+b-1} \mid \abs{U^1 \cap x^\bot} \cdot \abs{W^1 \cap y^\bot} + \abs{U^1 \setminus x^\bot} \cdot \abs{W^1 \setminus y^\bot} = \abs{B \cap z^\bot}.
\end{align*}
\end{proof}

Now we are able to prove the main theorem.

\begin{theorem*}
The subsets $B,C \subseteq V^0$ have the following properties.
\begin{enumerate}[(i)]
\item $2^{d-1}$ divides $\abs{B \cap C^g}$ for all $g \in \Or(V,\theta)$ but
\item $2^{d-1}$ is no divisor of $\abs{B}\abs{C}$.
\end{enumerate}
\end{theorem*}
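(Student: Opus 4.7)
The plan is to derive both statements directly from the preceding lemmas; essentially all the real work has already been done in Lemmas~\ref{lem:comb}, \ref{lem:B} and \ref{lem:dividesBcapC}, so only a short assembly argument is needed.

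For part (i), I would first observe that every $g \in \Or(V,\theta)$ preserves $\theta$ (hence the set $V^0$) and preserves $\varphi$, so
\[
C^g = V^0 \cap (c^g)^\bot \qquad \text{with } c^g \in V^1.
\]
By Lemma~\ref{lem:B} we have $B \subseteq V^0$, and moreover $0 \notin B$: an equation $u+w=0$ with $u\in U$, $w\in W$ forces $u=w\in U\cap W = \lbrace 0 \rbrace$, contradicting $\theta(u)=1$. Consequently $B \cap C^g = B \cap (c^g)^\bot$, and applying Lemma~\ref{lem:dividesBcapC} with $z := c^g \in V^1$ yields $2^{d-1}\mid \abs{B\cap C^g}$.

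For part (ii), I would compute $\abs{C}$ using Lemma~\ref{lem:comb}(ii) applied to $c\in V^1$:
\[
\abs{C} = \abs{V^0\cap c^\bot} = \abs{V^1\cap c^\bot} - 1 = 2^{2d-2} - 1,
\]
which is odd. It therefore suffices to show that $2^{d-1}$ does not divide $\abs{B}$, and this is exactly the content of Lemma~\ref{lem:B}.

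The main obstacle here is unusually mild; it is really just the bookkeeping of verifying that the zero vector (formally in $V$ but excluded from $V^0$ and $C$) plays no role in the identification $B\cap C^g = B\cap (c^g)^\bot$, together with the routine check that $c^g$ stays in $V^1$ under the $\Or(V,\theta)$-action. Once these trivialities are dispatched, both halves of the theorem drop out of the lemmas at once.
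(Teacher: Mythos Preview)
Your proof is correct and follows essentially the same route as the paper: you reduce $B\cap C^g$ to $B\cap (c^g)^\bot$ via the $\Or(V,\theta)$-invariance of $V^0$ and $\varphi$, invoke Lemma~\ref{lem:dividesBcapC}, and then compute $\abs{C}=2^{2d-2}-1$ from Lemma~\ref{lem:comb}(ii) to feed into Lemma~\ref{lem:B}. Your explicit check that $0\notin B$ is harmless but redundant, since $B\subseteq V^0$ already excludes the zero vector.
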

\begin{proof}
For every $g\in \Or(V,\theta)$ we have
\begin{align*}
B \cap C^g = B \cap (V^0 \cap c^\bot)^g = B \cap V^0 \cap (c^\bot)^g = B \cap V^0 \cap (c^g)^\bot = B \cap (c^g)^\bot.
\end{align*}
Using Lemma~\ref{lem:dividesBcapC} with $c^g \in V^1$ yields that $\abs{B \cap (c^g)^\bot} = \abs{B \cap C^g}$ is divisible by $2^{d-1}$.
Furthermore, Lemma~\ref{lem:comb} yields
\begin{align*}
\abs{C} = \abs{V^0 \cap c^\bot}
= \abs{(V^0 \cap c^\bot) \cup \lbrace 0 \rbrace} -1
= 2^{2d -2} -1,
\end{align*}
and in particular $\abs{C}$ is odd. Thus $2^{d-1} \nmid \abs{B}\abs{C}$ follows from $2^{d-1} \nmid \abs{B}$ which was proven in Lemma~\ref{lem:B}.

\end{proof}

\section{Remark: Limitations of the contradicting subset method}

In \cite{Mueller2011}, Müller and Nagy gave many successful applications of their method of contradicting subsets, for example by showing the non-existence of sharply $2$-transitive sets in the Mathieu group $M_{23}$, the alternating groups $A_n$, $n \equiv 2,3 \bmod 4$, the Conway group $Co_3$ or in automorphism groups of nontrivial symmetric designs.

However, there remain some finite doubly transitive permutation groups where the question of existence of sharply $2$-transitive subsets is still open. The only almost simple groups among them are the largest Mathieu group $M_{24}$, the alternating groups $A_n$ for $n \equiv 0,1 \bmod 4$ and the symmetric groups $S_n$ (of course apart from prime powers, $n=10$, and the exceptions through the Bruck-Ryser-Theorem).

It is therefore interesting to ask, whether it is possible to find contradicting subsets for these permutation groups. Unfortunately, it is often the case that the corresponding system $\eqref{eq:J}$ of linear equations is solvable over the integers which implies the non-existence of contradicting subsets, see Remark~\ref{rem:modk}.

In \cite[4. Remarks on $M_{24}$]{Mueller2011}, it is shown that $\eqref{eq:J}$ has an integer solution for $M_{24}$ of degree $24 \cdot 23$. The remaining part of our paper discusses the integral solvability of $\eqref{eq:J}$ for the full symmetric group $S_n$ of degree $n(n-1)$. More precisely, we give a system of linear equations consisting of roughly $n$ unknowns and $3$ equations whose integer solvability implies that of $\eqref{eq:J}$ for $S_n$ of degree $n(n-1)$. Using this, computer calculations show the integer solvability of $\eqref{eq:J}$ for $S_n$ of degree $n(n-1)$ for at least all $n \leq 100$.

In our linear equations, certain coefficients $\Gamma_{i,j}^{(n)}$ play an important role.
With the convention $\gcd(\emptyset)=0$, these are defined as 
\begin{align*}
\Gamma_{i,j}^{(n)} := \gcd \left( \frac{n!}{\prod_{k=1}^n k^{e_k} e_k!} : e_1=i,e_2=j,\sum_{k=1}^n ke_k = n, e_k \in \lbrace 0,\dots, n \rbrace \right).
\end{align*}
Note that the coefficient $\Gamma_{i,j}^{(n)}$ is just the greatest common divisor of the sizes of all conjugacy classes in $S_n$ having exactly $i$ $1$-cycles and $j$ $2$-cycles.
Now set $m=\lfloor n/2 \rfloor$ and consider the following system of linear equations with unknowns $y_{i,j}$, $i=0,1$, $j=0, \dots, m$:

\begin{align}\label{eq:zsolvesym}
\begin{split}
\sum_{j = 0}^m y_{0,j} j\Gamma_{0,j}^{(n)} + \sum_{j = 0}^m y_{1,j} j\Gamma_{1,j}^{(n)} &= n(n-1)/2\\
\sum_{j = 0}^m y_{0,j}\Gamma_{0,j}^{(n)} &= n-1\\
\sum_{j = 0}^m y_{1,j}\Gamma_{1,j}^{(n)} &=  n(n-2)
\end{split}
\end{align}
We note here without proof that \eqref{eq:zsolvesym} corresponds to the system \eqref{eq:J} for $S_n$, $n \geq 4$, of degree $n(n-1)$ where we only consider integer solutions $x_g$, $g \in S_n$, that are constant on conjugacy classes and are zero on non-identity classes having more than one fixed point on $\lbrace 1, \dots, n \rbrace$.
In particular, the $\mathbb{Z}$-solvability of \eqref{eq:zsolvesym} implies that of \eqref{eq:J} for $S_n$ of degree $n(n-1)$.

To give a weaker condition for the solvability of \eqref{eq:J} over the integers, consider the $\mathbb{Z}$-module

\begin{align*}
M_n := \sum_{j=0}^{\lfloor n/2 \rfloor} \Gamma_{0,j}^{(n)}
\begin{pmatrix}
1 \\ j
\end{pmatrix}
\mathbb{Z}
\end{align*}
and the vectors
\begin{align*}
v_n &:= 
\begin{pmatrix}
n-1 \\ (n^3-6n^2+9n-3)(n-3)n(n-1)/2
\end{pmatrix}, \\
w_n &:=
\begin{pmatrix}
n-1 \\ -(n^2-n-1)(n-3)n(n-1)/2
\end{pmatrix}.
\end{align*}
It is not difficult to see that $v_n \in M_n$ and $w_{n-1}\in M_{n-1}$ imply the $\mathbb{Z}$-solvability of \eqref{eq:zsolvesym} and therefore of \eqref{eq:J} for $S_n$ of degree $n(n-1)$. 
Computer calculations show $v_n \in M_n$ and $w_{n-1}\in M_{n-1}$ for all $n \leq 100$ and it follows that contradicting subsets cannot be used to disprove those numbers to be orders of projective planes. We conjecture that this holds for all $n$ and pose it as an open problem.

\section*{Acknowledgement}

The author would like to thank Peter Müller for introducing him to the subject as well as for carefully reading earlier versions of this paper.

\end{document}